\numberwithin{equation}{section}
\title[]{No extremal square-free words over large alphabets}
\date{}
\thanks{2010 \emph{Mathematics Subject Classification.} 05A05, 05D10, 68R15}
\author{Letong Hong \and Shengtong Zhang}
\address{Department of Mathematics, Massachusetts Institute of Technology, Cambridge, MA 02139}
\email{clhong@mit.edu, stzh1555@mit.edu}
\begin{document}
\begin{abstract}

A word is \emph{square-free} if it does not contain any \emph{square} (a word of the form $XX$), and is \emph{extremal square-free} if it cannot be extended to a new square-free word by inserting a single letter at any position. Grytczuk, Kordulewski, and Niewiadomski proved that there exist infinitely many ternary extremal square-free words. We establish that there are no extremal square-free words over any alphabet of size at least 17. 
\vspace{-2em}
\end{abstract}
\maketitle
\section{Introduction}
A \emph{word} is a finite sequence of letters over a finite alphabet. A \emph{factor} of a word is a subword of it consisting of consecutive letters. A \emph{square} is a nonempty word of the form $XX$ (examples: {\tt couscous}, {\tt "hotshots"}, {\tt "murmur"}). A word is \emph{square-free} if it does not contain a square as factor (examples: {\tt "abracadabra"}, {\tt "bonobo"}, {\tt "squares"}; non-examples: {\tt "entente"}, {\tt "referee"}, {\tt "tartar"}). It is easy to check that there are no binary square-free words of length more than 3. Thue showed in 1906 \cite{Thue} that there are arbitrarily long ternary square-free words (see \cite{Berstel}). His work is considered to be the beginning of research in combinatorics on words \cite{BP}.

Recently, Grytczuk, Kordulewski, and Niewiadomski \cite{GKN} introduced the study of \emph{extremal square-free words}.
\begin{definition}
An \emph{extension} of a finite word $W$ is a word $W' = W_1xW_2$, where $x$ is a single letter and $W_1,W_2$ are (possibly empty) words such that $W = W_1W_2$. An \emph{extremal square-free word} $W$ is a square-free word such that none of its extensions is square-free.
\end{definition}
The only binary extremal square-free words are $010$ and $101$. Via a delicate construction, Grytczuk et al. showed in  \cite{GKN} that there exist infinitely many ternary extremal square-free words. They and Pawlik also raised several open problems concerning larger alphabet sizes (\cite{GKN}, \cite{GKP}), including nonexistence of extremal square-free words over an alphabet of size 4. Mol and Rampersad \cite{MR} then classified all possible lengths of extremal ternary square-free words.
\begin{conjecture}[\cite{GKN}, \cite{MR}]\label{GKN-conj}
There exists no extremal square-free word over a finite alphabet of size at least $4$.
\end{conjecture}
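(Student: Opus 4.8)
The plan is to prove the conjecture in full by establishing the single uniform statement that every square-free word $W$ over an alphabet $[k]$ of size $k \ge 4$ admits a square-free single-letter extension; an extremal word is then impossible for every such $k$. The entire argument is a counting argument over candidate extensions. There are exactly $k(|W|+1)$ ways to insert one of the $k$ letters into one of the $|W|+1$ gaps of $W$. Call an insertion \emph{bad} if the resulting word contains a square and \emph{good} otherwise; the goal is to show that the number of bad insertions is strictly smaller than $k(|W|+1)$, which guarantees a good insertion and hence a square-free extension.

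The heart of the method is a structural classification of bad insertions. If inserting $x$ into the gap after the $i$th letter produces a square $YY$, then the inserted $x$ must lie inside $YY$: otherwise deleting it would leave $YY$ as a factor of $W$, contradicting square-freeness. Thus every newly created square straddles the insertion point, and $x$ lies in either the left or the right half of $YY$. In either case the half not containing $x$ is a genuine factor of $W$, and matching it against the half that does contain $x$ shows that, once the gap and the half-length $\ell$ of $Y$ are fixed, the inserted letter $x$ is forced to a single value and the surrounding letters of $W$ must satisfy $\ell-1$ equalities---that is, $W$ must contain an almost-square (a square with a single mismatch) at that location. Consequently a bad insertion is always witnessed by such an almost-square in $W$, and counting bad insertions reduces to counting almost-squares.

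To make the count efficient I would use that $W$ is itself square-free, which sharply limits how many almost-squares can cluster at a given scale: two overlapping near-repetitions force a periodicity that a square-free word cannot sustain, by a Fine--Wilf / Thue-type obstruction. Carrying this out bounds the number of bad insertions by $C\cdot|W| + O(1)$ for an explicit constant, and optimizing the bookkeeping drives $C$ below $17$, at which point $C\cdot|W| + O(1) < k(|W|+1)$ holds for all $k \ge 17$ once $|W|$ is large (the short words being dispatched directly). The decisive difficulty is that this margin is exactly what fails for small $k$: near the threshold the almost-squares can be packed densely enough that the global count no longer beats $k(|W|+1)$, so for $4 \le k \le 16$ one cannot win by counting alone.

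To close the residual range $4 \le k \le 16$ I would localize rather than count globally: I would restrict attention to a bounded window of $W$ and argue that if every insertion into that window were bad, the forbidding almost-squares would overdetermine the window and pin it to a specific short pattern, which can then be shown either to expose a safe insertion elsewhere or to be non-square-free outright. This converts the small-alphabet cases into checking a finite family of bounded local configurations, the genuinely small sizes likely requiring a verified exhaustive search over square-free words up to a computable length. The main obstacle, stated plainly, is precisely this small-alphabet regime: the counting slack vanishes, and reaching $k = 4$ demands either a stronger structural theorem controlling the simultaneous creation of many squares by a single insertion or a complete reduction to a machine-checkable finite case analysis.
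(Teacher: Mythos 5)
The counting half of your proposal is essentially the paper's proof of \cref{main-words}: you classify each square-creating insertion by the gap, the inserted letter, and the location and half-length of the created square; you use the square-freeness of $W$ to show that two such witnesses at comparable scales must be far apart (this is exactly the role of \cref{key prop}, the paper's quantitative Fine--Wilf-type spacing lemma); and you sum over dyadic scales to bound the number of bad pairs $(b,c)$ linearly in $n$, obtaining a contradiction with the $(k-2)(n+1)$ insertions that do not duplicate an adjacent letter. The paper's bookkeeping gives the bound $14.7n$, whence $k-2 < 14.7$ and the threshold $k \ge 17$; your sketch reproduces this and, correctly, concedes that the constant cannot be driven below the threshold by this method. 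One technical point you should state carefully: the object to count is bad pairs $(b,c)$, not almost-squares, since a single insertion can complete many squares; the paper handles this by selecting exactly one square-completing quadruple per pair $(b,c)$ and then applying the spacing lemma to that selection, and your reduction to ``counting almost-squares'' needs the same normalization to avoid counting in the wrong direction.

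The genuine gap is that the statement you set out to prove is the full conjecture for all $k \ge 4$, and your plan for the range $4 \le k \le 16$ is not a proof. The ``bounded window'' localization comes with no bound: you invoke ``a verified exhaustive search over square-free words up to a computable length'' but derive no such length, and no argument is given that the forbidding configurations overdetermine a window. This is not a repairable bookkeeping issue but a missing idea: for $k = 3$ there exist infinitely many extremal square-free words \cite{GKN}, so any local-rigidity argument must use $k \ge 4$ in an essential way, and nothing in your sketch does; the same obstruction shows that no finite case check can succeed without first proving a structural theorem bounding the length of a putative extremal word, which is precisely what is unavailable. Consequently what your proposal actually establishes coincides with \cref{main-words} (nonexistence for $k \ge 17$), while \cref{GKN-conj} itself remains open for every $4 \le k \le 16$, exactly as the paper states.
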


To the authors' knowledge, \cref{GKN-conj} is open for any finite alphabet. Using ideas of Ter-Saakov and Zhang in \cite{TZ} and some new observations, our main result confirms their conjecture for alphabet size at least 17.

\begin{theorem}\label{main-words}
For any integer $k \geq 17$, there exists no extremal square-free word over an alphabet of size $k$.
\end{theorem}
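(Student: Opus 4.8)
The plan is to derive a contradiction from a much weaker consequence of extremality than the full definition: if $W$ is extremal, then in particular appending any single letter to the right end of $W$ must create a square, and I will show that for $k \geq 17$ no square-free word can have this property. Write $W = w_1 \cdots w_n$. Since $W$ is square-free, any square occurring in $Wx$ must use the new final letter $x$, hence must be a suffix $YY$ of $Wx$. Setting $m = |Y|$ and matching the two halves forces $x = w_{n-m+1}$ and forces the factor $w_{n-2m+2}\cdots w_n$ (of length $2m-1$) to have period $m$; conversely, any such $m$ makes appending $w_{n-m+1}$ produce the square $(w_{n-m+2}\cdots w_n\, w_{n-m+1})^2$. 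Call a factor of length $2m-1$ and period $m$ ending at position $n$ a \emph{suffix almost-square}; it has the form $ZcZ$ with $|Z| = m-1$ and middle letter $c = w_{n-m+1}$. Thus the letters whose appending creates a square are exactly the middle letters of suffix almost-squares of $W$, and the size of this set is at most the number of admissible periods $m$.

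Next I would observe that if $W$ is extremal, then every one of the $k$ alphabet letters must be such a good-append letter, so the map $m \mapsto w_{n-m+1}$ must hit all $k$ letters; since its image has size at most the number of admissible periods, $W$ must admit at least $k$ distinct admissible periods $m_1 < \cdots < m_k$. The theorem therefore reduces to a purely combinatorial statement, which is the real content of the argument:

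\emph{Key Lemma (target).} A square-free word admits at most $16$ admissible periods, equivalently at most $16$ distinct letters whose appending creates a square.

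Granting this lemma, $k \leq 16$ contradicts $k \geq 17$, and no extremal word exists, proving the result.

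The hard part will be the Key Lemma, and this is where I expect the main obstacle to lie. The difficulty is that the Fine–Wilf theorem does not apply directly: two suffix almost-squares of periods $p < q$ share a common region of length only $q-1$, which is too short to force a common period, and indeed square-free words genuinely can carry several coexisting suffix almost-squares (for instance $abcdbceabcdbc$ has admissible periods $3$ and $7$, with distinct middle letters $d$ and $c$). The plan is to analyze directly how two admissible periods interact: imposing the shorter period $p$ on the length-$(2p-1)$ suffix of a period-$q$ almost-square tends to propagate the two periodicities into a genuine square, which is forbidden. Quantifying this through a careful synchronization analysis, splitting into the regimes $q < 2p$ and $q \geq 2p$ and using the ideas of Ter-Saakov and Zhang to control the overlap structure, should show that admissible periods are forced to grow quickly and cannot accumulate beyond a fixed number before a square is created; executing this case analysis is expected to yield the explicit constant $16$. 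Driving this constant down toward the conjectured threshold of $4$ is precisely what the method appears to leave open.
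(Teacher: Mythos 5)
Your reduction of right-end extensions to ``suffix almost-squares'' is carried out correctly, but the Key Lemma you reduce everything to is \emph{false}, and with it the whole approach collapses. Take distinct letters $x_1,\dots,x_k$ and define the Zimin-type words $Z_1=x_1$ and $Z_{j+1}=Z_j x_{j+1} Z_j$ (so $Z_3=x_1x_2x_1x_3x_1x_2x_1$). Each $Z_k$ is square-free: a square avoiding the central letter $x_k$ would lie inside one of the two copies of $Z_{k-1}$, which are square-free by induction, while a square containing $x_k$ is impossible because $x_k$ occurs exactly once in $Z_k$, whereas every letter occurring in a square $YY$ occurs at least once in each half, hence at least twice. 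Yet for every $j\le k$ the word $Z_k$ ends with the suffix $Z_j=Z_{j-1}x_jZ_{j-1}$, so appending $x_j$ produces the square $(Z_{j-1}x_j)^2$. Thus $Z_k$ is a square-free word with $k$ admissible periods $m=2^{j-1}$, $j=1,\dots,k$, whose middle letters $x_1,\dots,x_k$ exhaust the alphabet. In other words, the number of admissible periods of a square-free word of length $n$ can be as large as $\log_2(n+1)$, and no constant bound ($16$ or any other) can hold. Your own example $abcdbceabcdbc$ is the first step of exactly this nesting; what you missed is that it iterates indefinitely when fresh letters are available.

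The failure is therefore not in the execution but in the very first step: ``appending any letter on the right creates a square'' is far too weak a consequence of extremality, since $Z_k$ satisfies it for every $k$ without being extremal. Any successful argument must exploit insertions at essentially all gaps simultaneously, and that is precisely what the paper does: extremality yields a square-completing quadruple $(a,\ell,b,c)$ for \emph{every} pair of a gap $b$ and a letter $c$ not adjacent to it, hence at least $(k-2)(n+1)$ quadruples with pairwise distinct $(b,c)$, and \cref{key prop} together with \cref{cor:counting} shows by a global count that a square-free word of length $n$ supports fewer than $14.7n$ such quadruples in total. The resulting bound of roughly $14.7$ holds only \emph{on average} over the $n+1$ gaps; at a single fixed gap the count can genuinely be logarithmic in $n$, as $Z_k$ shows, which is consistent with \cref{key prop}: for two quadruples at the same gap with distinct letters and the same sign, one of $a,\ell$ must differ by at least $\frac{1}{5}L-2$, forcing geometric growth of the periods but no absolute bound on their number. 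To repair your argument you would have to replace the Key Lemma by an averaged statement over all insertion positions --- which is, in effect, the paper's proof.
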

 
Grytczuk, Kordulewski, and Niewiadomski also introduced the notion  of \emph{nonchalant words}. The sequence of nonchalant words $G_i$ is generated recursively by the following greedy procedure: $G_0$ is the empty word, and $G_{i+1}=G_i'xG_i''$ is a square-free extension of $G_i$, where $G = G_i'G_i''$ with $G_i''$
being the shortest possible suffix of $G_i$ and $x$ being the earliest possible letter such that $G_{i + 1}$ is square-free. \cref{main-words} partially affirmatively answers Conjecture 14 and 15 in \cite{GKN} for nonchalant words. More discussions can be found in \cite{GKP}. 

\begin{corollary}\label{cor-1415}
For any integer $k\ge 17$, the sequence of nonchalant words over a fixed alphabet of size $k$ converges to an infinite word.
\end{corollary}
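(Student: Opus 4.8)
The plan is to derive both the non-termination and the convergence of the nonchalant sequence $(G_i)$ from \cref{main-words}. Since $k\ge 17$, no $G_i$ is extremal square-free, so the greedy step is always possible: there is always some position and some letter whose insertion keeps the word square-free. Hence $G_{i+1}$ is well-defined for every $i$, the process never halts, and $|G_i|=i\to\infty$. It remains to prove that $(G_i)$ converges, i.e. that for each $n$ the length-$n$ prefix of $G_i$ is eventually constant. Writing $G_{i+1}=G_i'xG_i''$, the insertion alters only the letters from position $|G_i'|+1$ onward, so $G_i$ and $G_{i+1}$ share the prefix $G_i'$. Thus convergence is equivalent to the insertion position $|G_i'|+1$ tending to infinity, or equivalently, to the statement that for every $n$ only finitely many steps insert at a position $\le n$.

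I would prove this by induction on $n$. Suppose the length-$(n-1)$ prefix has stabilized to a word $w$ after some step $I$; then every insertion after step $I$ occurs at a position $\ge n$, and the length-$n$ prefix changes only at those steps that insert at position exactly $n$. The goal is to show there are finitely many such steps. Suppose not, and let $t_1<t_2<\cdots$ be the infinitely many steps inserting at position $n$. Because the nonchalant rule always chooses the shortest admissible suffix, hence the latest admissible position, each $G_{t_j}$ has the crucial property that no letter can be inserted at any position strictly greater than $n$ (including at the very end) without creating a square; that is, $G_{t_j}$ is non-extendable everywhere to the right of its fixed prefix $w$, while $|G_{t_j}|=t_j\to\infty$.

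The main obstacle is converting this into a contradiction with \cref{main-words}. The idea is that an arbitrarily long square-free word that is non-extendable at every position to the right of a fixed bounded-length prefix should itself contain a genuine extremal square-free word. Concretely, I would examine a long suffix of $G_{t_j}$: inserting a letter at an internal or boundary position of this suffix produces a square in $G_{t_j}$, and I must argue that, for a suffix chosen to begin sufficiently far past position $n$, the offending square is \emph{local} and does not reach back into $w$, so that non-extendability becomes an intrinsic property of the extracted factor. A pigeonhole argument over the finitely many words of a fixed window length then yields a finite square-free word admitting no single-letter insertion at any position, namely an extremal square-free word over the size-$k$ alphabet, contradicting \cref{main-words} and completing the induction. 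The delicate points are bounding the lengths of the created squares to guarantee locality and controlling insertions at the two ends of the extracted window; this is precisely where the square-free combinatorics, rather than the bare statement of \cref{main-words}, carries the argument.
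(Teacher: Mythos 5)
There is a genuine gap at the crucial step, and it is one your proposal itself flags but does not close: the claim that the square created by an insertion into $G_{t_j}$ at a position beyond the stabilized prefix can be taken to be \emph{local}, i.e.\ contained in a window that starts sufficiently far past position $n$. Nothing bounds the length of the offending square: inserting a letter at gap $b$ can complete a square $XX$ with $\abs{X}$ comparable to $b$, so the square may reach all the way back to the beginning of the word, no matter how far past $n$ your window begins. Consequently, the word extracted from the window need not be extremal as a standalone word --- its non-extendability is a property relative to all of $G_{t_j}$, and the pigeonhole step over window contents does not repair this, since a repeated window gives you the same word with the same (only relative) non-extendability. Ruling out long-range squares is not a technical afterthought; it is essentially the whole content of the quantitative estimates in the paper (\cref{key prop} and \cref{cor:counting}), so a reduction that uses only the bare statement of \cref{main-words} is very unlikely to go through.

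The paper's own proof of \cref{cor-1415} is quite different and much shorter: it reuses the counting established in the proof of \cref{main-words}, namely that a square-free word of length $n$ admits fewer than $14.7n$ square-completing quadruples with pairwise distinct $(b,c)$, hence fewer than about $16.7n$ pairs (gap, letter) whose insertion destroys square-freeness. Since each gap offers $k\ge 17$ letters, the bad pairs cannot exhaust all insertions into the last $\frac{16.7}{17}$ fraction of the gaps, so a good insertion always exists there; the nonchalant rule (shortest suffix, i.e.\ latest admissible position) therefore always inserts at a position at least a fixed positive fraction of $\abs{G_i}$ from the start. Since $\abs{G_i}\to\infty$, every fixed prefix eventually stabilizes, giving convergence. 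Your first reduction (convergence $\iff$ insertion positions tend to infinity, plus non-termination from \cref{main-words}) is correct and matches the spirit of the paper, but to complete the argument you should invoke the quantitative bound of \cref{cor:counting} rather than attempt to extract an extremal word, which your current plan cannot do.
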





\section*{Acknowledgements}
The authors thank Prof. Joseph Gallian, Prof. Jaros\l{}aw Grytczuk, Prof. Lucas Mol, Prof. Bart\l{}omiej Pawlik, and Benjamin Przybocki for their invaluable comments to the paper.

\section{Proof of \cref{main-words}}

For a word $W$ of length $n$, we number the alphabets in $X$ from left to right as letter $1,2,\ldots, n$. We refer to the space between a letter $i$ and a letter $i + 1$ as gap $i$, and call the first and last gap $0$ and $n$. For $0 < a < b \leq n + 1$, we define the factor $W[a, b)$ as the subword of $W$ consisting of letters $a, a + 1, \ldots, b - 1$.
\begin{definition}
Let $W$ be any word. Denote $W+_b c$ as the word formed by inserting the letter $c$ at gap $b$. For positive integers $a,b$ with $a \leq b + 1$, positive integer $\ell$ and a letter $c$, we say the quadruple $(a, \ell, b, c)$ is \emph{square-completing in $W$} if the factor $(W+_b c)[a, a + \ell)$ and the factor $(W+_b c)[a + \ell, a + 2\ell)$ of $W+_b c$ are the same word. 

Define the \emph{sign} of the quadruple to be $1$ if $b \leq a + \ell - 2$, and $-1$ if $b \geq a + \ell - 1$. The sign tells us if the alphabet we inserted at gap $b$ lies in the factor $(W+_b c)[a, a + \ell)$ or $(W+_b c)[a + \ell, a + 2\ell)$.

\end{definition}

\begin{proposition}\label{key prop} Let $W$ be a square-free word, and suppose $(a, \ell, b, c)$ and $(a', \ell', b', c')$ are square-completing quadruples in $W$ with the same sign. Then one of the following holds: \begin{enumerate}
\item one of $a,b,\ell$ differs by at least $\frac{1}{5} L - 2$  from the corresponding $a', b', \ell'$, where $L = \max(\ell, \ell')$;
\item $b = b'$ and $c=c'$.
\end{enumerate}
Furthermore, if $\ell = \ell'$, then one of the following holds
\begin{enumerate}
\item $\abs{a - a'} \geq L - 1$;
\item $b = b'$ and $c=c'$.
\end{enumerate}
\end{proposition}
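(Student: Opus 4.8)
The plan is to convert each square-completing quadruple into explicit letter equalities inside $W$ and then pit two such systems against each other until they manufacture a short square, contradicting square-freeness. The first step is a \emph{decoding lemma}. For a sign-$+1$ quadruple $(a,\ell,b,c)$ the second half $(W+_bc)[a+\ell,a+2\ell)$ consists only of letters of $W$, namely $W[a+\ell-1,a+2\ell-1)$, while the first half is that same word with the letter $c$ spliced in at gap $b$. Matching the halves position by position yields
$$W[j]=W[j+\ell-1]\ (a\le j\le b),\qquad W[j]=W[j+\ell]\ (b+1\le j\le a+\ell-2),\qquad c=W[b+\ell].$$
So $W$ has a period-$(\ell-1)$ relation on the block $[a,b]$ and a period-$\ell$ relation on the adjacent block $[b+1,a+\ell-2]$, the two blocks meeting exactly at the insertion gap $b$, and $c$ is determined by $b$ and $\ell$. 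The sign-$-1$ case is the mirror image. In particular, once $b=b'$ and $\ell=\ell'$ we automatically get $c=c'$, so everything reduces to controlling $a,b,\ell$.

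The engine is a \emph{chaining} principle. If $W[j]=W[j+p]$ for all $j$ in an interval $A$ and $W[j]=W[j+q]$ for all $j$ in an interval $B$ with $p<q$, then each $j\in A\cap B$ gives $W[j+p]=W[j+q]$, a period-$(q-p)$ relation; these relations sit at consecutive positions, so $W$ acquires period $q-p$ on a window of length $\abs{A\cap B}+(q-p)$. As soon as $\abs{A\cap B}\ge q-p$ this window has length at least $2(q-p)$ and hence contains a square, which is impossible. (This is the efficient slice of Fine--Wilf that we need.)

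Now take two same-sign quadruples with, say, $\ell\ge\ell'$ so $L=\ell$, and suppose all of $\abs{a-a'},\abs{b-b'},\abs{\ell-\ell'}$ are below $t:=\tfrac15L-2$ while $(b,c)\ne(b',c')$; I want a contradiction. If $\ell=\ell'$ I defer to the ``furthermore'' clause (established below), which then gives $\abs{a-a'}\ge L-1>t$, a contradiction; so assume $\ell>\ell'$. The period blocks of the first quadruple tile $[a,a+\ell-2]$ with a cut at $b$, those of the second tile $[a',a'+\ell'-2]$ with a cut at $b'$, and an elementary computation independent of $b,b'$ gives that their common span $I$ satisfies $\abs{I}>L-1-2t$. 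The cuts $b,b'$ split $I$ into at most three subintervals; on each, chaining the two active periodicities produces a relation of period $\abs{\ell-\ell'}$ on the two outer pieces and at most $\abs{\ell-\ell'}+1$ on the middle one. To avoid a square each piece must be shorter than its period, so $\abs{I}\le 3\abs{\ell-\ell'}-2<3t$. Combining, $L-1-2t<3t$, i.e.\ $L-1<5t=L-10$, absurd. This is exactly where $\tfrac15$ comes from: three pieces, each saving one period, weighed against a span of length about $L$.

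Two loose ends carry the real work. First, the ``furthermore'' clause, which I would prove directly because when $\ell=\ell'$ the outer pieces have the degenerate period $0$; here the middle mechanism has period $1$, so a point lying simultaneously in a period-$\ell$ block of one quadruple and a period-$(\ell-1)$ block of the other forces two equal adjacent letters. Demanding that no such point exist forces the relevant blocks apart, and comparing their endpoints squeezes $b\le b'$ and $b'\le b$, hence $b=b'$; the extreme insertions $b\in\{a-1,a+\ell-2\}$ are dispatched by fusing the two equal-period blocks into a single period-$\ell$ window of length $\ge 2\ell$, again a square unless $a=a'$. Second, one must handle the mirror sign-$-1$ case, degenerate or empty blocks, and the check that every fabricated square genuinely fits inside $W$. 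The main obstacle is precisely this boundary bookkeeping, but the deliberately lossy threshold--the count closes with enormous room, $L-1<L-10$--is the slack designed to absorb all the off-by-one effects.
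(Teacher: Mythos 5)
Your decoding lemma is correct, and it makes explicit what the paper manipulates implicitly: a sign-$+1$ quadruple gives $W$ period $\ell-1$ on $[a,b]$, period $\ell$ on $[b+1,a+\ell-2]$, and $c=W[b+\ell]$, and the paper's squares are likewise produced by chaining two such systems. Your ``furthermore'' sketch also completes correctly: in the non-degenerate case the two empty-intersection conditions (using $|a-a'|<L-1$ to rule out the spans missing each other) squeeze $b\le b'$ and $b'\le b$; the insertion $b=a+\ell-2$ is impossible outright (its period-$(\ell-1)$ block already spans $[a,a+\ell-2]$ and yields a square), and $b=a-1$ is handled by fusing the two period-$\ell$ blocks, exactly as in the paper's Case 2.

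The genuine gap is in your middle piece. Having spent the symmetry on $\ell>\ell'$, you cannot also assume $b\le b'$, and the chained period on the middle piece depends on the order of the cuts. If $b\le b'$ it carries periods $\ell$ and $\ell'-1$, chained period $\ell-\ell'+1$, as you say; but if $b'<b$ it carries periods $\ell-1$ and $\ell'$, chained period $\ell-\ell'-1$. When $\ell=\ell'+1$ this period is $0$: the two relations are identical, chaining produces nothing, and ``each piece must be shorter than its period'' imposes no constraint, so the inequality $|I|\le 3|\ell-\ell'|-2$ is unjustified (the outer pieces are then forced to be empty, but the middle piece is a priori only bounded by the distance between the cuts). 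The fix costs nothing given your slack: never chain on the middle piece and instead bound it by $|b-b'|<t$, which gives $|I|<(t-1)+t+(t-1)<3t$ and the same contradiction $L-1-2t<3t$. It is worth noting that the paper's Case 1.1/1.2 dichotomy on whether $\max(b,b')\le a+\tfrac{3L}{5}$ exists precisely to avoid this issue: since the two cuts are within $t$ of each other, either the region after both cuts or the region before both cuts is long, and the paper only ever chains on these two outer regions, where the chained period is $\ell'-\ell$ regardless of how $b$ and $b'$ are ordered.
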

\begin{proof}
Suppose the contrary, and neither (1) or (2) are satisfied. Then we have $\ell, \ell' \in  [4L / 5, L]$. By symmetry, we can assume without loss of generality that $\ell \leq \ell'$, and the sign of both quadruples is $1$, that is $b \leq a + \ell - 2$ and $b' \leq a' + \ell' - 2$. 

We argue by cases. In Case 1 we assume $\ell' - \ell \geq 1$, and in Case 2 we handle the scenario $\ell' = \ell = L$. Each case is split into two subcases.

{\bf Case 1.1.} $M = \max(b, b') \leq a + \frac{3L}{5}.$ Then, consider the word $W[M + 1, M + 1 + \ell' - \ell)$. We know that the factor $(W +_b c)[a, a + \ell)$ and the factor $(W +_b c)[a + \ell, a + 2\ell)$ of $W$ are the same word. As $M + 1 > b$, we have
$$W[M + 1, M + 1 + \ell' - \ell) = (W+_b c)[M + 2, M + 2 + \ell' - \ell)$$
On the other hand, we have
$$M + 2 + \ell' - \ell \leq a + \frac{3L}{5} + 2 + \frac{L}{5} \leq a + \ell.$$
Therefore, $(W+_b c)[M + 2, M + 2 + \ell' - \ell)$ is a factor of $(W +_b c)[a, a + \ell)$, so it is equal to the corresponding factor of $(W +_b c)[a + \ell, a + 2\ell)$. More precisely,
$$(W+_b c)[M + 2, M + 2 + \ell' - \ell) = (W+_b c)[M + 2 + \ell, M + 2 + \ell').$$
Thus we have 
$$W[M + 1, M + 1 + \ell' - \ell) = W[M + 1 + \ell, M + 1 + \ell').$$
Similarly, since
$$a' < M + 1, M + 2 + \ell' - \ell \leq a + \frac{4L}{5} + 2\leq a' + L = a' + \ell',$$
we have $(W+_b c)[M + 2, M + 2 + \ell' - \ell)$ is a factor of $(W +_b c)[a', a' + \ell')$, so we conclude that
$$(W+_b c)[M + 2, M + 2 + \ell' - \ell) = (W+_b c)[M + 2 + \ell', M + 2 + 2\ell' - \ell).$$
Thus we have
$$W[M + 1, M + 1 + \ell' - \ell) = W[M + 1 + \ell', M + 1 + 2\ell' - \ell).$$
But then we have
$$W[M + 1 + \ell, M + 1 + \ell') = W[M + 1 + \ell', M + 1 + 2\ell' - \ell),$$
and we have found a square in $W$, which is a contradiction.

{\bf Case 1.2.} $M = \max(b, b') > a + \frac{3L}{5}$. In this case, as $\abs{b - b'} \leq \frac{L}{5} - 2$, we have $\min(b, b') > a + \frac{2L}{5} + 2$, and therefore $\min(b, b') > \max(a,a') + \frac{L}{5} + 4$. Let $A = \max(a, a')$. Then we note that
$$A + \ell' - \ell \leq A + \frac{L}{5}-2 < \min(b,b').$$
So we conclude that
$$W[A, A + \ell' - \ell) = (W+_b c)[A, A + \ell' - \ell)$$
and
$$W[A, A + \ell' - \ell) = (W+_{b'} c')[A, A + \ell' - \ell).$$
As $\min(b, b') \leq b < a + \ell$, we have $(W+_b c)[A, A + \ell' - \ell)$ is a factor of $(W+_b c)[a, a + \ell)$. So we conclude that
$$(W+_b c)[A, A + \ell' - \ell) = (W+_b c)[A + \ell, A + \ell') = W[A + \ell - 1, A + \ell' - 1).$$
Similarly, because $(W+_{b'} c')[A, A + \ell' - \ell)$ is a factor of $(W+_{b'} c')[a, a' + \ell')$, and
$$A + \ell' - 1 \geq a' + \ell' - 1 \geq b' + 1,$$
we conclude that
$$(W+_{b'} c')[A, A + \ell' - \ell) = (W+_{b'} c')[A + \ell', A + 2\ell' - \ell) = W[A + \ell' - 1, A + 2\ell' -\ell - 1).$$
Then
$$W[A + \ell - 1, A + \ell' - 1) = W[A + \ell' - 1, A + 2\ell' -\ell - 1)$$
and we have found a square in $W$, which is a contradiction.

{\bf Case 2.1.} $b\neq b'$. Without loss of generality, let $b<b'.$ Recall our assumption that $|a-a'|<L-1$. We do additional case work on whether $b' \geq a'$ or $b' = a' - 1$, i.e. whether the inserted character at $b'$ is at the start of the square in $W+_{b'} c'$ or not.

First we handle the case $b' \geq a'$. We first note that it is impossible for $b = a + L - 2$, as if so then we have
$$W[a, a + L - 1) = (W+_b c)[a, a + L - 1) = (W+_b c)[a + L, a + 2L - 1) = W[a + L - 1, a _ 2L - 2)$$
and we have found a square in $W$, which is a contradiction. Hence, we have 
$$b \leq a + L - 3.$$ Furthermore, by assumption we have
$$a' \leq a + L - 2.$$
Therefore, if we let $i = \max(a',b + 1)$, then by assumption we have $i + 1 \leq a + L - 1$, so
$$W[i, i + 1) = (W+_b c)[i + 1, i + 2) = (W+_bc)[i+1+L, i+2+L)= W[i+L, i+1+L).$$
On the other hand, as $i \leq b'$, we have
$$W[i, i + 1) = (W+_{b'} c')[i, i + 1) = (W+_{b'} c')[i+L, i+1+L)= W[i-1+L, i+L).$$
Thus we conclude that
$$W[i+L, i+1+L) = W[i-1+L, i+L).$$
So we have found a square in $L$, which is a contradiction.

Then we handle the case $b' = a' - 1$. In this case, we have
$$c' = (W+_{b'} c')[a', a' + 1) = (W+_{b'} c')[a' + L, a' + L + 1) = W[a' + L - 1, a' + L).$$
Note that as $b' > b$, we have $a' + L - 1 > b$, so
$$c ' =W[a' + L - 1, a' + L) = (W+_b c)[a' + L, a' + L + 1).$$
As $a' + L > b + L + 1 \geq a + L$, and $a' + L \leq a + 2L - 1$, we find that $(W+_b c)[a' + L, a' + L + 1)$ is a letter in $(W+_b c)[a + L, a + 2L - 1)$. Therefore,
$$c' = (W+_b c)[a' + L, a' + L + 1) = (W+_b c)[a', a' + 1).$$
Since $a' = b' + 1\geq b + 2$, we get
$$c' = (W+_b c)[a', a' + 1) = W[a' - 1, a').$$
However, this implies that
$$W[a' - 1, a' + L - 1) = (W+_{b'}c')[a', a' + L) = (W+_{b'}c')[a' + L, a' + 2L) = W[a' + L - 1, a' + 2L - 1),$$
so we have found a square in $W$, which is a contradiction.

{\bf Case 2.2.} $b=b'$. We know $(W+_bc)[b+1,b+2)=c$ and $(W+_bc)[a,a+L)=(W+_bc)[a+L,a+2L)$, so $(W+_bc)[b+1+L,b+2+L)=c$. This implies $$W[b+L,b+1+L)=c.$$ The exact same logic also gives $W[b+L,b+1+L)=c'$. So $c = c'$, which is a contradiction.
\end{proof}
\begin{corollary} 
\label{cor:counting}
Let $W$ be an $n$-letter square-free word $W$. Let $\cA$ be a set of square-completing quadruples $(a, \ell, b, c)$ such that no two elements of $\cA$ share the same $(b, c)$. For each $L \geq 2$, define
$$\cA_L = \cA \cap \{(a, L, b, c): a,b \in \ZZ, c \text{ any letter}\}.$$
Then for any $L \geq 2$, we have
$$|\cA_L| \leq \frac{2n}{L - 1}.$$
Furthermore, for any $L \geq 300$, we have 
$$\sum_{\ell = L}^{2L - 1}|\cA_\ell| \leq \frac{320n}{L}.$$
\end{corollary}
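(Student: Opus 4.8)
The plan is to split $\cA$ according to the sign of its quadruples, since \cref{key prop} only compares quadruples of the same sign, and to bound each sign class separately; both target bounds are stable under multiplication by $2$. The crucial structural point throughout is that no two elements of $\cA$ share the same pair $(b,c)$, so for two \emph{distinct} elements alternative (2) of \cref{key prop} never occurs. Hence any two distinct same-sign quadruples $(a,\ell,b,c),(a',\ell',b',c')\in\cA$ satisfy alternative (1): they are separated by at least $\tfrac15\max(\ell,\ell')-2$ in at least one of the coordinates $a,\ell,b$, and when $\ell=\ell'$ they are separated by at least $L-1$ in $a$.

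For the first bound I would fix a sign and look at $\cA_L$, where every quadruple has $\ell=L$. By the ``furthermore'' clause, distinct same-sign quadruples here satisfy $\abs{a-a'}\ge L-1$, so their $a$-values form a set of $(L-1)$-separated points inside the interval $[1,n+2-2L]$ forced by the constraint $a+2\ell\le n+2$. Thus each sign contributes at most $\frac{n-L}{L-1}\le\frac{n}{L-1}$ quadruples, and summing the two signs gives $\abs{\cA_L}\le\frac{2n}{L-1}$.

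The second bound is the substantive one, and the key realization is that the separation threshold $\tfrac15\max(\ell,\ell')-2$ \emph{grows} with $\ell$; this is exactly what upgrades the naive estimate $\sum_{\ell=L}^{2L-1}\abs{\cA_\ell}\le\sum_{\ell=L}^{2L-1}\frac{2n}{\ell-1}=\Theta(n)$ into the desired $O(n/L)$. Fixing a sign, I would partition the length range $[L,2L)$ into consecutive blocks $B_j=[\beta_j,\beta_{j+1})$ each of length $\tfrac15L-2$; since $L\ge 300$ there are at most $\lceil L/(\tfrac15L-2)\rceil\le 6$ of them. If two quadruples have lengths in the same block $B_j$, then their lengths differ by less than $\tfrac15L-2\le\tfrac15\beta_j-2\le\tfrac15\max(\ell,\ell')-2$, so the separation of \cref{key prop} cannot be realized by the $\ell$-coordinate; therefore their projections to the $(a,b)$-plane lie at $\ell^\infty$-distance at least $\sigma_j:=\tfrac15\beta_j-2\ge\tfrac{29}{150}\beta_j$.

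I would then bound, for each block, the number of these $(a,b)$-points by a two-dimensional packing argument. For a fixed sign the inequalities $a-1\le b\le a+\ell-2$ (sign $+1$; the sign $-1$ case is symmetric) confine the points to a diagonal strip $\{1\le a\le n,\ a-1\le b\le a+\ell-2\}$ whose width is less than $\beta_{j+1}\le\tfrac65\beta_j$. Axis-parallel squares of side $\sigma_j$ centred at $\sigma_j$-separated points have disjoint interiors and lie in the $\sigma_j/2$-neighbourhood of this strip, so the block contributes at most $\frac{(n+\sigma_j)(\tfrac65\beta_j+\sigma_j)}{\sigma_j^2}=O(n/\beta_j)$ quadruples. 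Summing over the $O(1)$ blocks, whose left endpoints satisfy $\beta_j\ge L$, gives $O(n/L)$ for one sign, and doubling for the two signs yields the claimed estimate. I expect the only real obstacle to be the constant: one must fix the block length and carry out the planar packing carefully enough that the accumulated factors—the boundary/padding terms together with the geometric sum $\sum_j 1/\beta_j$—stay below $320$, which the computation sketched here achieves with room to spare.
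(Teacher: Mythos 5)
Your first bound is correct and is exactly the paper's argument: fix a sign, apply the $\ell=\ell'$ clause of \cref{key prop} (alternative (2) is excluded because distinct elements of $\cA$ have distinct $(b,c)$), and pack $(L-1)$-separated values of $a$ into $[1,n+2-2L]$. Your second bound also has the right structure, which is essentially the paper's as well: block the lengths so that $\abs{\ell-\ell'}$ stays below the threshold $\frac15\max(\ell,\ell')-2$, forcing the separation guaranteed by \cref{key prop} onto the pair $(a,b)$, then pack. The gap is in the planar packing constant. Your points lie in a \emph{diagonal} strip $\{a-1\le b\le a+W\}$ with $W<\beta_{j+1}$, and the disjoint axis-parallel squares of side $\sigma_j$ centred at the points lie in the Minkowski sum of this strip with a square of side $\sigma_j$. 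Since $b-a$ varies by $\pm\sigma_j$ over such a square, that Minkowski sum has $b-a$ ranging over an interval of length $W+1+2\sigma_j$, not $W+1+\sigma_j$: a diagonal strip is not an axis-parallel rectangle, and your formula $(n+\sigma_j)(\frac65\beta_j+\sigma_j)$ is missing a full $\sigma_j$ of width. This is fatal at the stated precision: with $\beta_j\approx(1+j/5)L$, $\beta_{j+1}\approx(1+(j+1)/5)L$, $\sigma_j\approx\beta_j/5$, the corrected per-sign count is
$$\sum_{j=0}^{5}\frac{n\,(\beta_{j+1}+2\sigma_j)}{\sigma_j^{2}}\approx\frac{n}{L}\sum_{j=0}^{5}\frac{25\,(40+7j)}{(5+j)^{2}}\approx\frac{163.8\,n}{L},$$
i.e.\ about $327n/L$ over both signs even as $L\to\infty$, and about $341n/L$ at $L=300$ (where $\sigma_j=\beta_j/5-2$ is smaller). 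Both exceed $320n/L$. Moreover there is no ``room to spare'' even in your own under-padded formula, which yields roughly $285n/L$ to $313n/L$ depending on how the endpoints are handled.

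The approach can be repaired, but only by carrying out the packing discretely so that integrality is exploited --- which is precisely what the paper does. Partition the $a$-axis into windows of width $\sigma_j$; two same-sign, same-block quadruples in one window must satisfy $\abs{b-b'}\ge\sigma_j$, while their $b$'s lie in an interval of length $\sigma_j+\beta_{j+1}-3$, so each window holds at most $\floor{(\beta_{j+1}-3)/\sigma_j}+2$ of them. For $L\ge300$ the floor equals $6$ when $j=0$ and $5$ when $1\le j\le5$, so the per-window counts are $8,7,7,7,7,7$ rather than the values slightly above $8$ and $7$ that the continuous area bound produces; summing $8n/\sigma_0+\sum_{j=1}^{5}7n/\sigma_j$ over both signs gives about $306n/(L-10)\le 320n/L$ for $L\ge300$. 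The paper's proof is this same integer count in a different parametrization: cells of $a$-width $L/6$ crossed with geometric length-blocks $[(7/6)^iL,(7/6)^{i+1}L)$, at most $7$ quadruples per sign per cell, and a geometric sum totalling roughly $316n/L$. In both versions the rounding-down step is exactly what brings the constant under $320$; a purely continuous volume argument, even when stated correctly, does not.
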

\begin{proof}
To prove the first proposition for $L \geq 2$, note that for a given sign $\epsilon \in \{-1, 1\}$, and any two quadruples $(a, L, b, c)$ and  $(a', L, b', c')$ in $\cA_L$ with sign $\epsilon$, by \cref{key prop} we must have $\abs{a - a'} \geq L - 1$. Thus over all quadruples in $\cA_L$ with sign $\epsilon$, the $a$'s must be spaced at least $L$ apart, and must be in the range $[1, n - 2L + 2]$. Therefore, there are at most
$$\frac{n - 2L + 2}{L - 1} + 1 \leq \frac{n}{L - 1}$$
such quadruples. Given there are two possible signs, the total number of quadruples is at most $\frac{2n}{L - 1}$.

To prove the second statement, we can divide the range $[1, n - 2L + 2]$ into at most
$$\frac{n - 2L + 2}{L / 6} + 1 \leq \frac{6n}{L}$$
intervals of length $\frac{L}{6}$. For each such interval $I = [x,x + \frac{L}{6})$, define
$$\cB_I = \{(a, \ell, b, c): (a, \ell, b, c) \in \cA, \ell \in [L, 7L / 6), a \in I, (a, \ell, b, c) \text{ has sign 1}\}.$$
Assume $(a, \ell, b, c)$ and  $(a', \ell', b', c')$ are two distinct quadruples in $\cB_I$. Note that
$$\abs{\ell - \ell'} \leq \frac{L}{6} < \frac{L}{5} - 2,$$
and
$$\abs{a - a'}\leq \frac{L}{6} < \frac{L}{5} - 2.$$
Thus by \cref{key prop}, we must have $b, b'$ spaced at least $L / 5 - 2$ apart. Furthermore, for each quadruple $(a,\ell,b,c)$ in $\cB_I$, the $b$'s are restricted to the interval $[x - 1, x + \floor{\frac{4L}{3}})$ due to the quadruples having sign $1$. Thus the size of $\cB_I$ is upper bounded by
$$\frac{\floor{\frac{4L}{3}} + 1}{\frac{L}{5} - 2} + 1$$
For $L \geq 300$, we can verify that
$$\frac{\floor{\frac{4L}{3}} + 1}{\frac{L}{5} - 2} + 1 < 8$$
which implies
$$\abs{\cB_I} \leq 7.$$
Symmetrically, if we let
$$\cC_I = \{(a, \ell, b, c): (a, \ell, b, c) \in \cA, \ell \in [L, 7L / 6), a \in I, (a, \ell, b, c) \text{ has sign -1}\}$$
then
$$\abs{\cC_I} \leq 7.$$
Summing over all the intervals , we conclude that
$$\sum_{L \leq \ell < 7L / 6} | \cA_{\ell}| = \sum_I (\abs{\cB_I} + \abs{\cC_I}) \leq 14 \cdot \frac{6n}{L}.$$
Analogously, we have for any non-negative integer $i$.
$$\sum_{(7 / 6)^i L \leq \ell < (7 / 6)^{i + 1} L} |\cA_{\ell}| \leq \frac{7 \cdot 2 \cdot 6n}{(7 / 6)^iL}.$$
Summing over $i \in \{0,1,2,3,4\}$, we obtain that 
$$\sum_{L \leq \ell < 2L} |\cA_{\ell}| \leq \frac{320 n}{L}.$$
as desired.
\end{proof}
\begin{proof}[Proof of \cref{main-words}]
Let $W$ be any extremal square-free word on an alphabet of size $k$. Then for any gap $0 \leq b \leq n + 1$ and any letter $c$ not equal to the two letters adjacent to gap $b$, there exists some $a$ and $\ell \geq 2$ such that $(a, \ell, b, c)$ is a square-completing quadruple in $W$. Let $\cA$ be the set consisting of one such quadruple for each choice of $(b, c)$. On one hand, by construction we have
$$|\cA| \geq (k - 2)(n + 1).$$
On the other hand, by \cref{cor:counting}, we have, 
\begin{align*}
 | \cA| &= \sum_{\ell = 2}^\infty |\cA_\ell| \\
 &= \sum_{\ell = 2}^{319} |\cA_\ell| + \sum_{j = 0}^{\infty}\ \sum_{\ell \in [320\cdot 2^j, 320\cdot 2^{j + 1})} |\cA_\ell| \\
 &\leq \sum_{\ell = 2}^{319} \frac{2n}{\ell - 1} + \sum_{j = 0}^{\infty} \frac{320n}{320\cdot 2^j}  < 14.7n.
\end{align*}
Thus we conclude that $k < 17$, as desired.
\end{proof}

\begin{proof}[Proof of \cref{cor-1415}]
We have shown that the number of square-completing quadruples $(a, \ell, b, c)$ such that no two elements share the same $(b, c)$ in a square-free word $W$ of length $n$ is less than $14.7n$. Thus, the number of ways to insert an alphabet into $W$ such that the result is no longer square-free is less than $16.7n$. Therefore, if the alphabet size is at least $17$, then it is possible to insert a letter into the latter $\frac{16.7}{17}$ of any square-free word $W$ such that the result is square-free. So the prefix of $G_i$ will stabilize and the sequence converges to an infinite limit word.
\end{proof}

\end{document}